\newtheorem{thm}{Theorem}
\newtheorem{lem}[thm]{Lemma}
\newtheorem{cor}[thm]{Corollary}
\theoremstyle{definition}
\newtheorem{defn}[thm]{Definition}
\newtheorem{exmp}[thm]{Example}
\newtheorem{rem}[thm]{Remark}          
\newtheorem*{ack}{Acknowledgments}      
\newtheorem{defn-thm}[thm]{Definition--Theorem}  
\newtheorem{defn-lem}[thm]{Definition--Lemma}  
\theoremstyle{remark}
\renewcommand{\o}[0]{{\mathcal O}} 
\newcommand{\z}[0]{{\mathbb Z}}
\newcommand{\p}[0]{{\mathbb P}}
\newcommand{\f}[0]{{\mathbb F}}
\newcommand{\pic}[0]{\operatorname{Pic}}
\newcommand{\rank}[0]{\operatorname{rank}}
\newcommand{\supp}[0]{\operatorname{Supp}}
\newcommand{\im}[0]{\operatorname{im}}
\newcommand{\ext}[0]{\operatorname{Ext}}
\newcommand{\cliff}[0]{\operatorname{Cliff}}
\newcommand{\gon}[0]{\operatorname{gon}}
\newcommand{\spl}[0]{\operatorname{Spl}}
\newcommand{\sF}{\mathcal{F}}
\newcommand{\sG}{\mathcal{G}}
\def\loccoh#1.#2.#3.#4.{H^{#1}_{#2}(#3,#4)}
\DeclareMathAlphabet{\mathchanc}{OT1}{pzc}%
                                {m}{it}
\begin{document}
\bibliographystyle{amsalpha}


\title[Ulrich bundles on rational surfaces with an anticanonical pencil]{Ulrich bundles on rational surfaces with an anticanonical pencil}
\author{Yeongrak Kim}

\maketitle

\begin{abstract}
Ulrich bundles are the simplest sheaves from the viewpoint of cohomology tables. Eisenbud and Schreyer conjectured that every projective variety carries an Ulrich bundle, which means it has the same cone of cohomology table as the projective space of same dimension. In this paper we show the existence of stable rank 2 Ulrich bundle on rational surfaces with an anticanonical pencil, under a mild Brill-Noether assumption by using Lazarsfeld-Mukai bundles. Also we see that each of those surfaces carries its Chow form given by the Pfaffian of skew-symmetric morphism coming from an Ulrich bundle.
\end{abstract}

\section{Introduction}

Let $X \subset \p^N$ be a smooth projective variety of dimension $n$, and let $\sF$ be a coherent sheaf on $X$. The cohomology table of $\sF$ is given as the following data

\begin{center}
\begin{tabular}[c]{c c c c c}
\\
\hline
$\cdots$ & $\gamma_{n,-n-1}$ & $\gamma_{n, -n}$ & $\gamma_{n, -n+1}$ & $\cdots$ \\
$\cdots$ & $\gamma_{n-1,-n}$ & $\gamma_{n-1, -n+1}$ & $\gamma_{n-1, -n+2}$ & $\cdots$ \\
$\cdots$ & $\vdots$ & $\vdots$ & $\vdots$ & $\cdots$ \\
$\cdots$ & $\gamma_{0,-1}$ & $\gamma_{0,0}$ & $\gamma_{0,1}$ & $\cdots$ \\
\hline
\\
\end{tabular}
\end{center}
where $\gamma_{i,j} = h^i (X, \sF (j))$. Varying the coherent sheaves, these data define the cone of cohomology tables $\mathcal{C}(X, \o_X(1)) \subset \mathcal{C}(\p^N , \o_{p^N}(1))$. One natural question is: how to understand this cone? For instance, a (general) linear projection $\pi : X \to \p^n$ induces an injection $\pi_{*} : \mathcal{C} (X, \o_X(1)) \to \mathcal{C}(\p^n , \o_{p^n}(1))$. If there is a vector bundle $E$ on $X$ such that $\pi_{*}E$ is trivial, then $\pi^{*}( \cdot ) \otimes E$ will be an inverse of $\pi_{*}$ (multiply some rational number, if necessary), hence $\pi_{*}$ becomes an isomorphism. Such $E$ has the simplest cohomology table, and we call it \emph{Ulrich}. 

Eisenbud and Schreyer studied on those Ulrich bundles widely and made a conjecture so that every variety has an Ulrich bundle(\cite{ESW03}). They also proved the existence of Ulrich bundles for curves in their paper and the existence of rank 2 Ulrich bundles for del Pezzo surfaces. However, even for surfaces, Ulrich bundles are not well understood; only known results for cubic surfaces(\cite{CH11}), del Pezzo surfaces $X_d \subset \p^d$ of degree $d$ (\cite{CKM13}, \cite{MRPL14}), quartic surfaces(\cite{CKM12}), and for general K3 surfaces (\cite{AFO12}).

The key idea of the proof in \cite{AFO12} comes from Brill-Noether theory on K3 surfaces. They found that Lazarsfeld-Mukai bundles are indeed Ulrich with assuming mild Brill-Noether property. On the rational surfaces, the behavior of Lazarsfeld-Mukai bundles is also well-known and it is similar to the K3 case. For more details, see \cite{LC13}. 
\\

Let $(S, H=\o_S(1))$ be a polarized smooth rational surface of degree $d=H^2$ with an anticanonical pencil $h^0(\omega_S^{\vee}) \ge 2$, and $C \in |\omega_S (3)|$ be a general curve. Suppose that $C$ has genus $g \ge 4$, gonality $k=\gon(C)$, and its Clifford dimension is $1$. Our main result is:

\begin{thm}\label{Main}
Let $(S,H)$ be a polarized rational surface as above. Suppose furthermore that the Clifford index of $C$ is computed by the restriction of $\omega_S (1)$ on $C$. Then $S$ carries a $(d - K_S^2 + 5)$-dimensional family of stable rank 2 Ulrich bundles $E$ with $\det(E)=\omega_S (3)$.
\end{thm}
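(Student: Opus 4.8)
The plan is to realize $E$ as a Lazarsfeld--Mukai bundle. Fix a general $C\in|\omega_S(3)|$ and a base-point-free pencil $A$ on $C$, set $F:=\ker\big(H^0(C,A)\otimes\o_S\onto A\big)$ and $E:=F^\vee$; this is a rank $2$ bundle with $c_1(E)=\o_S(C)=\omega_S(3)$ and $c_2(E)=\deg A$, and dualizing the defining sequence gives
\begin{equation*}
0\to\o_S^{\oplus 2}\to E\to A^\vee\otimes\o_C(C)\to 0.
\end{equation*}
Since a rank $2$ Ulrich bundle must have the Hilbert polynomial of $\o_{\p^2}^{\oplus 2d}$, its Chern classes are forced; with $c_1(E)=\omega_S(3)$ this pins down $c_2(E)$, so I would take $A$ of the corresponding degree $\deg A=c_2(E)$. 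The point of the choice $\det E=\omega_S(3)$ is that for a rank $2$ bundle $E^\vee\cong E\otimes(\det E)^{-1}$, so $E$ is Ulrich-self-dual: $E(-1)\cong E^\vee\otimes\omega_S(2)=F(K_S+2H)$, and Serre duality gives $H^i(E(-1))^\vee\cong H^{2-i}(E(-2))$. Hence the two Ulrich vanishings collapse into one and it suffices to prove $H^\bullet(E(-1))=0$.

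To establish $H^\bullet(E(-1))=0$ I would use the identification $E(-1)\cong F(K_S+2H)$ and twist the defining sequence of $F$ by $K_S+2H$. Kodaira vanishing gives $H^i(\o_S(K_S+2H))=0$ for $i>0$ (as $2H$ is ample), so the cohomology of $F(K_S+2H)$ is governed by two statements on $C$: the vanishing $H^1\big(A\otimes(K_S+2H)|_C\big)=0$, and the bijectivity of the restriction map $H^0(\o_S(K_S+2H))\otimes H^0(A)\to H^0\big(A\otimes(K_S+2H)|_C\big)$. By Serre duality on $C$ together with adjunction $\omega_C=(2K_S+3H)|_C$, the first is equivalent to
\begin{equation*}
H^0\big(\omega_S(1)|_C\otimes A^\vee\big)=0,
\end{equation*}
which is precisely a comparison between $A$ and $\omega_S(1)|_C$; here the hypothesis that $\cliff(C)$ is computed by $\omega_S(1)|_C$ is what forces the vanishing for a general pencil $A$ of the prescribed degree. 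For the bijectivity I would invoke general position of $C$ in $|\omega_S(3)|$ and the anticanonical pencil to control the restriction, reducing it to a Brill--Noether/projective-normality statement.

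For stability I would argue by contradiction. If $E$ is not $H$-stable it contains a sub-line-bundle $M$ with $2\,M\cdot H\ge\omega_S(3)\cdot H$, sitting in $0\to M\to E\to I_Z\otimes N\to 0$ with $N=\det E\otimes M^{-1}$ and $Z$ a zero-dimensional subscheme. Restricting this sequence to $C$ and comparing with the presentation of $E$, whose sections are controlled by $H^0(A)$ and $H^0(\omega_C\otimes A^\vee)$, such an $M$ yields a line bundle on $C$ that either contributes a Clifford index strictly below $\cliff(C)$, or computes $\cliff(C)$ while forcing the Clifford dimension to exceed $1$. Both possibilities contradict the standing hypotheses that $\cliff(C)$ is computed by $\omega_S(1)|_C$ and that $C$ has Clifford dimension $1$. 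I expect this to be the main obstacle: faithfully translating a destabilizing subsheaf on $S$ into Clifford-theoretic data on $C$, and extracting the contradiction, is delicate, and it is exactly where both gonality hypotheses and the identification of the computing bundle do the real work.

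Once $E$ is stable the dimension count is formal. Stability gives $\Hom(E,E)=\c$; and since $h^0(\omega_S^\vee)\ge 2$ forces $-K_S$ to be effective and nonzero, $K_S\cdot H<0$, so $E\otimes\omega_S$ has strictly smaller slope than the stable bundle $E$ and $\Hom(E,E\otimes\omega_S)=0$, whence $\ext^2(E,E)=0$ by Serre duality. Thus the moduli space $M_H(2;\omega_S(3),c_2)$ is smooth at $[E]$ of dimension
\begin{equation*}
\ext^1(E,E)=1-\chi(E,E)=4c_2-c_1^2-3,
\end{equation*}
using $\chi(E,E)=4\chi(\o_S)-(4c_2-c_1^2)$ and $\chi(\o_S)=1$. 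Substituting $c_1=\omega_S(3)$ and the value of $c_2$ forced by the Ulrich Hilbert polynomial gives $\ext^1(E,E)=d-K_S^2+5$. Finally, the Ulrich condition is open, so the stable Ulrich bundles produced above form a nonempty open subset of this moduli space, i.e.\ a $(d-K_S^2+5)$-dimensional family with $\det E=\omega_S(3)$, as required.
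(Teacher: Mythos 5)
Your skeleton is right --- Lazarsfeld--Mukai bundles for a pencil of degree $g-k+3$ on $C\in|\omega_S(3)|$, the self-duality $E\simeq E^\vee\otimes\omega_S(3)$ collapsing the Ulrich conditions, and an $\ext^1(E,E)=4c_2-c_1^2-3=d-K_S^2+5$ count at the end --- but the two steps that carry all the weight are asserted rather than proved. First, the cohomology vanishing. Your reduction of $H^\bullet(E(-1))=0$ to (a) injectivity/bijectivity of $\mu:H^0(A)\otimes H^0(\o_S(K_S+2H))\to H^0(A\otimes\o_C(K_S+2H))$ and (b) $H^0(\omega_S(1)|_C\otimes A^\vee)=0$ is correct, but neither is established. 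For (b), the hypothesis that $\cliff(C)$ is computed by $\omega_S(1)|_C$ does not obviously ``force'' the vanishing: $A$ has degree $g-k+3$, well above the gonality, so $A\hookrightarrow\omega_S(1)|_C$ produces no immediate Clifford-index contradiction. In the paper that hypothesis is used only numerically, to pin down $k=\cliff(\o_C(K_S+H))+2$ so that a complete base-point-free $\mathfrak g^1_{g-k+3}$ exists (this existence, via the linear-growth condition of Lelli-Chiesa, is itself a lemma you skipped: you ``fix a base-point-free pencil $A$'' without producing one of the required degree). For (a), ``invoke general position\ldots reducing it to a Brill--Noether/projective-normality statement'' is exactly the missing content. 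The paper proves $H^0(E(-1))=0$ (from which $H^2(E(-1))\simeq H^0(E(-2))^*=0$ and then $H^1=0$ via $\chi=0$ follow for free) by a genuine parameter count: sections of $E$ give $0$-cycles $\Gamma$ of length $g-k+3$ satisfying Cayley--Bacharach for $|\omega_S^2(3)|$, the failure of the vanishing means $\Gamma$ lies on some $D\in|\omega_S(2)|$ with $h^0(D,\o_D(\Gamma-H))\ge1$, and a dimension bound on such cycles (the paper's Lemma on cycles, whose proof is where the anticanonical pencil enters, via $\o_S(2K_S+H)\hookrightarrow\o_S(K_S+H)$) shows this locus cannot fill the $(3d-K_S^2+4)$-dimensional parameter space.

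Second, stability. Your proposed route --- translating a destabilizing sub-line-bundle $M\subset E$ into a line bundle on $C$ violating $\cliff(C)$ or the Clifford dimension --- is only a sketch, and you yourself flag it as the main obstacle; as written there is no argument. The paper avoids this entirely: Ulrich bundles are automatically semistable, the moduli space $M_H(2;\omega_S(3),g-k+3)$ is known (Donaldson, Zuo, Costa--Mir\'o-Roig) to be nonempty, smooth, irreducible of dimension $d-K_S^2+5$ and dense in the space of simple bundles, the constructed Lazarsfeld--Mukai family already has dimension at least $d-K_S^2+5$, and the strictly semistable locus has strictly smaller dimension, so the general member is stable. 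Your final paragraph (smoothness via $\ext^2(E,E)=0$ from $K_S\cdot H<0$, openness of the Ulrich condition) is fine, but it presupposes the stability you have not established. In short: correct architecture, but the vanishing $H^0(E(-1))=0$, the existence of the pencil of degree $g-k+3$, and stability all remain unproved, and these are precisely the nontrivial parts of the theorem.
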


In particular, $d > (-K_S \cdot H) + 1$, which is equivalent to the condition $h^0 (C, \o_C(K_S +        H )) \ge 2$ and $h^1(C, \o_C (K_S + H)) \ge 2$. Our strategy is almost same as the proof in  \cite{AFO12}; showing that a certain class of Lazarsfeld-Mukai bundles satisfies the Ulrich condition. The difference comes from little adjustment while applying the analogue vector bundle techniques on rational surfaces, rather than K3 surfaces. In particular, the existence of anticanonical pencil is crucial in several places, which implies very nice properties of Lazarsfeld-Mukai bundles. 

Here the assumption $\cliff(C) = \cliff(\omega_S (1) \big{|}_C)$ assures the existence of a complete and base point free pencil of degree $\left( \frac{5}{2}d + \frac{3}{2}(K_S \cdot H) + 2 \right)$. It is obvious to show that $\cliff(\omega_S (1) \big{|}_C)$ is minimal among the Clifford indices of restrictions of adjoint line bundles $\cliff(\omega_S (m) \big{|}_C)$ for $m \in \z_+$. Nevertheless it is still a subtle question whether $\omega_S (1) \big{|}_C$ computes the Clifford index of $C$. 

\begin{rem}
We do not need to worry about the case $(d-K_S^2 + 5) \le 0$. This only happens for a projective plane $\p^2$, a quadric surface $Q=\p^1 \times \p^1 \subset \p^3$, and a blow-up of $\p^2$ at a point, which is a Hirzebruch surface $\f_1 \subset \p^4$. Each surface carries an Ulrich line bundle $\o_{\p^2}$, $L$ of type $(0,1)$, and $M$ of type $(1,1)$, respectively.
\end{rem}

\begin{ack}
The author thanks Marian Aprodu for very helpful discussions, suggestions and providing nice examples. This work was supported by NRF(National Research Foundation of Korea) Grant funded by the Korean Government (NRF-2010-Fostering Core Leaders of the Future Basic Science Program). 
\end{ack}

\section{Ulrich bundles on rational surfaces}

Let $X \subset \p^N$ be a smooth projective variety of dimension $n$. A vector bundle $E$ on $X$ is \emph{Ulrich} if and only if
\begin{eqnarray}
\ H^i (X, E(-i)) = 0 \text{ for } i>0 \text{ and } H^i (X, E(-i-1))=0 \text{ for } i<n.
\end{eqnarray}

For smooth surfaces $S$, the above condition is equivalent to the vanishing of the following cohomology groups 
\begin{displaymath}
H^0(S, E(-1)), H^1(S, E(-1)), H^1(S, E(-2)), H^2(S, E(-2)).
\end{displaymath}

This implies the further vanishing $H^0 (S, E(-2)) = H^2 (S,E(-1))=0$, so $E$ should be $0$-regular and $\chi (X,E(-1)) = \chi (X,E(-2))=0$. Applying Riemann-Roch to both of them and taking the difference, we obtain the relation 
\begin{eqnarray}
H \cdot \left( c_1 (E) - \frac{\rank(E)}{2}(K_S + 3H) \right) = 0.
\end{eqnarray}

This motivates the following definition of special Ulrich bundles by Eisenbud and Schreyer:

\begin{defn}[\cite{ESW03}]
A \emph{special Ulrich bundle} on a surface $S \subset \p^N$ is a $0$-regular rank $2$ vector bundle $E$ with determinant $\det (E) = \omega_S (3)$.  
\end{defn}

They also proved in \cite{ESW03} that if there is a rank 2 Ulrich bundle $E$ of $\det(E)=\omega_S(3)$, it must be a Lazarsfeld-Mukai bundle for some curve $C \in |\omega_S(3)|$ and line bundle $A \in \pic(C)$ of degree $c_2 (E)$. Hence it is natural to ask whether certain Lazarsfeld-Mukai bundles are Ulrich. 

\begin{rem}
A special Ulrich bundle is Ulrich. Let $E$ be a special Ulrich bundle on $S$. There is a perfect pairing $E \times E \to \det(E)$ so we have a natural isomorphism $E \simeq E^{\vee} \otimes \det(E)$.  By Serre duality, $H^0(S, E(-1)) \simeq H^2(S, E^{\vee}\otimes \omega_S (1))^{*} \simeq H^2(S, E(-2))^{*}$. Similarly, $H^1(S, E(-2)) \simeq H^1(S, E(-1))^{*}$.
\end{rem}

For technical reasons we need to assume the existence of anticanonical pencil $h^0 ( \omega_S ^{\vee}) \ge 2$. This guarantees the vanishing of $H^1$ of Lazarsfeld-Mukai bundles. Let $(S,H)$ be a polarized smooth, projective rational surface of degree $H^2 = d$ with an anticanonical pencil.

\begin{lem} \label{Pencil}
Let $(S,H)$ be a polarized smooth rational surface with an anticanonical pencil and $C \in |\omega_S (3)|$ a general curve. Suppose that $C$ has genus $g \ge 4$ and Clifford dimension $1$. Then $C$ carries a complete, base point free pencil $\mathfrak{g}^{1}_{g-k+3}$.
\end{lem}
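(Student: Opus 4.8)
The plan is to produce the pencil purely by Brill--Noether theory on $C$, using the gonality bound together with a linear-growth estimate for the Brill--Noether loci; the surface enters only through the latter estimate.

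First I would compute the Brill--Noether number of a $\mathfrak{g}^1_{g-k+3}$. With $d = g-k+3$ one has $\rho(g,1,d) = g - 2(g-d+1) = g - 2k + 4$. Since $C$ has gonality $k$, the Brill--Noether bound for pencils gives $k \le \rdown{(g+3)/2}$, whence $\rho = g - 2k + 4 \ge 1 > 0$. By the existence theorem of Kempf and Kleiman--Laksov, $W^1_{d}(C) \ne \emptyset$, and by the lower-bound half of the dimension theorem (valid for every smooth curve) each component of $W^1_{d}(C)$ has dimension at least $g - 2k + 4$. It therefore remains to exhibit a point of $W^1_{d}(C)$ whose linear series is both complete (so that $h^0 = 2$, a genuine pencil) and base point free.

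For base point freeness, observe that a degree-$d$ pencil with a base point is obtained from a degree-$(d-1)=(g-k+2)$ pencil by adding a point, so the non--base-point-free locus in $W^1_{d}(C)$ is contained in the image of the addition map $W^1_{g-k+2}(C) \times C \to \pic^{d}(C)$, of dimension at most $\dim W^1_{g-k+2}(C) + 1$. The key input is the linear-growth estimate $\dim W^1_{g-k+2}(C) \le g - 2k + 2$ (the boundary case of Aprodu's condition). Granting it, the non--base-point-free locus has dimension at most $g - 2k + 3$, strictly smaller than $g - 2k + 4 \le \dim W^1_{d}(C)$, so no component of $W^1_d(C)$ is contained in it and a general member is base point free. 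Completeness is handled by the Clifford dimension $1$ hypothesis: a class with $h^0 \ge 3$ lies in $W^2_{d}(C)$, and since the Clifford index $k-2$ of $C$ is computed only by pencils, $W^2_{d}(C)$ is a proper subvariety of strictly smaller dimension; thus the general member has $h^0 = 2$. Combining the two, a general class of $W^1_{d}(C)$ yields the desired complete base point free $\mathfrak{g}^1_{g-k+3}$.

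The main obstacle is the linear-growth estimate $\dim W^1_{g-k+2}(C) \le g - 2k + 2$; everything else is formal dimension counting against the inequality $g-2k+3 < g-2k+4$. This is precisely where the hypotheses are used: that $C$ is a general member of $|\omega_S(3)|$ on a rational surface carrying an anticanonical pencil plays the role the K3 structure plays in \cite{AFO12}. Concretely, I would control $\dim W^1_{g-k+2}(C)$ through the geometry of $S$ --- the anticanonical pencil forces the vanishing of $H^1$ of the relevant Lazarsfeld--Mukai bundles, which in turn bounds the Brill--Noether loci --- or import the analogous statement for these surfaces from \cite{LC13}. Verifying this estimate is the real content of the lemma; once it is in hand the pencil drops out of the dimension count above.
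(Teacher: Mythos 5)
Your proposal follows essentially the same route as the paper: both reduce the lemma to the linear growth estimate $\dim W^1_{\ell}(C)\le \ell-k$ imported from \cite{LC13}, and then run the dimension count $\dim\bigl(C+W^1_{g-k+2}(C)\bigr)\le g-2k+3 < g-2k+4 \le \dim W^1_{g-k+3}(C)$ to show a general member of every component is base point free. There are two discrepancies worth flagging. First, your completeness argument is not correct as stated: a complete $\mathfrak{g}^2_{g-k+3}$ has Clifford index $(g-k+3)-4=g-k-1$, which is $\ge k-2$ whenever $k\le (g+1)/2$, so Clifford dimension $1$ by itself neither forbids such series nor bounds $\dim W^2_{g-k+3}(C)$. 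The paper instead gets completeness from the same containment used for base points, namely $W^2_{g-k+3}(C)\subseteq C+W^1_{g-k+2}(C)$ (subtract a point), so the single bound $\dim W^1_{g-k+2}(C)\le g-2k+2$ disposes of both failure modes at once; you should replace your Clifford-dimension argument with this. Second, the linear growth condition of \cite{LC13} is only available on the range $k\le \ell\le g-k+2$, which is empty when $g$ is odd and the gonality is maximal, $k=(g+3)/2$; the paper treats that case separately, observing that there $g-k+3=k$ and $W^1_{g-k+2}(C)=W^1_{k-1}(C)=\emptyset$ by definition of gonality (and that Clifford dimension $1$ is automatic). Your uniform convention that ``dimension $\le -1$ means empty'' happens to give the right answer, but you are invoking the estimate outside its stated range, so this case split should be made explicit. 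With these two repairs the argument coincides with the paper's.
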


\begin{proof}
Suppose that $C$ does not have a maximal gonality, say $k=\gon(C) \le \frac{g+2}{2}$. 
Then $C$ satisfies the linear growth condition
\[
\dim W_{\ell}^1 (C) \le \ell-k \text{ for } k \le \ell \le g-k+2.
\]
thanks to the results of \cite{LC13}. In particular, every component of $W_{g-k+2}^1 (C)$ has dimension $g-2k+2 = \rho(g,1,g-k+2) \ge 0$. Then it follows that each component of $W_{g-k+3}^1 (C)$ has dimension $\rho(g,1,g-k+3) = g-2k+4$ via excess linear series, where $\dim (C + W_{g-k+2}^1 (C)) = g-2k+3$. Hence we conclude that the general element in every component of $W_{g-k+3}^1 (C)$ is base point free and complete. 

For $C$ having odd genus and maximal gonality, it has Clifford dimension 1 automatically(cf. \cite{Apr13}) and carries a $\infty^1$-family of minimal pencil. Here, $g-k+3=k$ and it cannot have any $\mathfrak{g}^1_{g-k+2}$ so the general element in $W_{g-k+3}^1 (C)$ is complete and base point free.
\end{proof}

\begin{rem}
In \cite{LC13}, the author also classified all the cases occuring exceptional curves(having Clifford dimension greater than 1). For some partial results on blow-ups of projective plane $\p^2$, see \cite{ESW03}.
\end{rem}

We need the following lemma for the proof of \emph{Theorem \ref{Main}}.

\begin{lem}\label{Cycles}
Let $(S,H)$ be a polarized smooth rational surface of degree $H^2 =d$ with an anticanonical pencil, and let $D \in |\omega_S (2)|$ be a smooth section. Then
\begin{displaymath}
\dim \{ \Gamma \in D_{\left(\frac{5}{2} d + \frac{3}{2}(K_S \cdot H) +2 \right)} \ | \ 
h^0 (D, \o_D (\Gamma - H )) \ge 1 \} \le d - (K_S \cdot H) - K_S^2 + 3.
\end{displaymath}
\end{lem}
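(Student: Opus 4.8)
The plan is to translate the condition $h^0(D,\o_D(\Gamma-H))\ge 1$ into membership of $\Gamma$ in a family built from a symmetric product of $D$, to read off the \emph{expected} dimension by Riemann--Roch, and then to control the loci where the relevant line bundle is special. First I would record the numerics of $D\in|\omega_S(2)|$. By adjunction $K_D\sim(2K_S+2H)\big|_D$, so the genus is $g:=g(D)=2d+3(K_S\cdot H)+K_S^2+1$, while $\deg\o_D(H)=(K_S+2H)\cdot H=2d+K_S\cdot H$. Writing $\delta=\tfrac52 d+\tfrac32(K_S\cdot H)+2$ for the degree in the statement, the class $\Gamma-H$ has degree $e:=\delta-\deg\o_D(H)=\tfrac12(d+K_S\cdot H)+2$ on $D$. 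A direct check gives $e+(\delta-g)=d-(K_S\cdot H)-K_S^2+3$, which is exactly the asserted bound: the right-hand side is the expected dimension, and the content of the lemma is that speciality never increases it.

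Next I would reformulate the locus. Since $h^0(\o_D(\Gamma-H))\ge 1$ means $\Gamma\sim H+\Gamma'$ for some effective $\Gamma'$ of degree $e$, the set in question is the image under the second projection of the incidence variety $\{(\Gamma',\Gamma)\in D_e\times D_\delta:\Gamma\in|\o_D(H+\Gamma')|\}$. Projecting instead to $D_e$, the fibre over $\Gamma'$ is the linear system $|\o_D(H+\Gamma')|$, of dimension $h^0(\o_D(H+\Gamma'))-1=(\delta-g)+h^1(\o_D(H+\Gamma'))$ by Riemann--Roch. Stratifying $D_e$ by the value $s=h^1(\o_D(H+\Gamma'))$, it therefore suffices to bound $\dim(\text{stratum})+(\delta-g+s)$ on each stratum. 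For $s=0$ the stratum sits inside $D_e$, contributing at most $e+(\delta-g)$, the desired bound; the work is in the strata with $s\ge 1$.

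By Serre duality and $K_D\sim(2K_S+2H)\big|_D$, one has $h^1(\o_D(H+\Gamma'))=h^0(\o_D(2K_S+H-\Gamma'))$, so with the fixed bundle $B:=\o_D(2K_S+H)$ the bad strata are $\{\Gamma'\in D_e:h^0(\o_D(B-\Gamma'))\ge s\}$. For these I would invoke a secant-type estimate: forming the incidence $\{(\Gamma',E):E\in|B-\Gamma'|\}$ and pushing it into $|B|$ by the generically finite map $(\Gamma',E)\mapsto\Gamma'+E$, the fibre over $\Gamma'$ has dimension $\ge s-1$, whence $\dim\{\Gamma':h^0(\o_D(B-\Gamma'))\ge s\}+(s-1)\le\dim|B|=h^0(B)-1$. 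Thus the $s$-th stratum contributes at most $h^0(B)+(\delta-g)$, and the entire lemma collapses to the single inequality $h^0(B)\le e$ (the case $\deg B<e$ being vacuous).

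Finally, this is where the anticanonical pencil is indispensable. Because $h^0(\omega_S^\vee)\ge 2$ I may fix an effective $A\in|-K_S|$; since $2K_S+H\sim(K_S+H)-(-K_S)$, restriction to $D$ gives $B\cong\o_D(K_S+H)(-A\cap D)$ with $A\cap D$ effective, so $h^0(B)\le h^0(\o_D(K_S+H))$. The sequence $0\to\o_S(-H)\to\o_S(K_S+H)\to\o_D(K_S+H)\to 0$, together with Kodaira vanishing $h^1(\o_S(K_S+H))=0$ (hence also $h^1(\o_S(-H))=0$), identifies $h^0(\o_D(K_S+H))=h^0(\o_S(K_S+H))=\chi(\o_S(K_S+H))=1+\tfrac12(d+K_S\cdot H)=e-1$. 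Therefore $h^0(B)\le e-1<e$, closing the argument. The hard part is precisely this last reduction: one must pin down the exact constant, i.e.\ that $h^0(\o_D(2K_S+H))$ stays below $e$, and it is the effective anticanonical divisor $A$ together with Kodaira vanishing that forces this; without the pencil the control of $B$ is lost and the expected dimension could be exceeded. I would also take care that the secant estimate is applied stratum-by-stratum and that the splitting map $(\Gamma',E)\mapsto\Gamma'+E$ is genuinely finite, which is routine but must be checked.
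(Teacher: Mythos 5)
Your argument is correct and follows essentially the same route as the paper: the same incidence variety over the symmetric product of degree $e=\alpha$, the same stratification by the speciality $s=h^1(\o_D(H+\Gamma'))$ (the paper's strata $\Sigma_i$), and the same use of the anticanonical section together with Kodaira vanishing to bound $h^0(\o_D(2K_S+H))$ by $\chi(\o_S(K_S+H))=e-1$. The only cosmetic difference is that you justify the bound on the special strata by a secant-type map into $|\o_D(2K_S+H)|$, where the paper simply asserts $\dim\Sigma_i\le\dim|\o_D(2K_S+H)|-i+1$.
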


\begin{proof}
Direct calculations show that $\deg (\o_D (H)) = 2d + (K_S \cdot H)$, $g(D) = 1 + \frac{1}{2} D \cdot (D+K_S ) = 2d + 3(K_S \cdot H) + K_S ^2 + 1$, and $\omega_D \simeq \o_D (2K_S + 2H)$. For simplicity, let $\alpha = \deg (\Gamma) - \deg(\o_D(H))= \frac{1}{2}d + \frac{1}{2}(K_S \cdot H) + 2$. We consider the incidence variety
\[
\mathcal{V} := \left\{ (\Gamma, \zeta) \in D_{\left(\frac{5}{2} d + \frac{3}{2}(K_S \cdot H) +2 \right)} \times D_{\alpha} \ \Big|\  \Gamma \in |\o_D (H + \zeta)| \right\}
\]
together with projections $\pi_1 : \mathcal{V} \to D_{\left(\frac{5}{2} d + \frac{3}{2}(K_S \cdot H) +2 \right)}$ and $\pi_2 : \mathcal{V} \to D_{\alpha}$. Note that $\pi_1 (\mathcal{V})$ is the variety whose dimension we have to compute. To estimate $\dim (\mathcal{V})$ we first observe the fibers of $\pi_2$. Riemann-Roch says
\begin{eqnarray*}
h^0 (D, \o_D (H + \zeta)) &=& 1- g(D) + \deg( \o_D (H + \zeta )) + h^1 (D, \o_D (H + \zeta)) \\
& = & h^0 (D, \o_D (2K_S + H - \zeta)) + \left( \frac{1}{2} d - \frac{3}{2}(K_S \cdot H) - K_S^2 + 2 \right).
\end{eqnarray*}
for every $\zeta \in D_{\alpha}$. 

Also notice that $H^1 (S, \o_S (1)) \simeq H^1 (H, \o_H (1)) = 0$ since $\deg \o_H(1) = d = 2 g(H) - 2 - (K_S \cdot H)$ and $-(K_S \cdot H) > 0$, that is, $\o_H(1)$ is nonspecial. By Serre duality, $H^1 (S, \omega_S (-1)) \simeq H^1(S, \o_S(1))^{*}$ also vanishes. This implies $H^0 (D, \o_D (2K_S + H)) \simeq H^0 (S, \o_S (2K_S + H)) \subseteq H^0 (S, \o_S (K_S + H))$ and $h^0 (S, \o_S (K_S + H)) = \chi(S, \o_S(K_S + H)) = \frac{1}{2} d + \frac{1}{2} (K_S \cdot H) + 1$ by Kodaira vanishing. Applying geometric Riemann-Roch theorem, we have
\begin{eqnarray*}
h^0 (D, \o_D (2K_S + H - \zeta)) &=& \max [ 0, h^0 (D, \o_D (2K_S + H)) - \deg \zeta] \\
& \le & \max[0, h^0(S, \o_S (K_S + H)) - \deg \zeta] \\
& = & \max [0,  \frac{1}{2}d + \frac{1}{2} (K_S \cdot H) + 1 - \alpha] \\
& = & 0
\end{eqnarray*}
for general divisor $\zeta$. Since the fiber $\pi_2 ^{-1} (\zeta) = \p H^0 (D, \o_D (H + \zeta))$, $\mathcal{V}$ has a unique irreducible component of dimension
\begin{eqnarray*}
\dim \pi_2^{-1} (\zeta) + \dim D_{\alpha} 
&\le& \left( \frac{1}{2} d - \frac{3}{2}(K_S \cdot H) - K_S^2 + 2 \right) -1 + \alpha \\
&\le & d - (K_S \cdot H) - K_S^2 + 3.
\end{eqnarray*}

Also consider $\Sigma_i := \left\{ \zeta \in D_{\alpha} \ |\  h^0(D, \o_D (2K_S + H - \zeta )) = i \right\}$, the locally closed variety of $D_\alpha$, which has dimension at most $\dim |\o_D(2K_S + H)| - i + 1 \le \frac{1}{2}d + \frac{1}{2}(K_S \cdot H) - i + 1$, for $i \ge 1$. For $\zeta \in \Sigma_i$, the fiber has larger dimension $\pi_2^{-1}(\zeta) \simeq \p^{\left( \frac{1}{2}d - \frac{3}{2}(K_S \cdot H) - K_S^2 + i + 1 \right)}$ than the general case, hence 
\begin{eqnarray*}
\dim \pi_2^{-1} (\Sigma_i ) & \le&  \dim \Sigma_i + \dim \pi_2^{-1}(\zeta) \\
& \le & d - (K_S \cdot H) - K_S^2 +2.
\end{eqnarray*}
Together with the general situation, we conclude that all components of $\mathcal{V}$ are of dimension $\le d - (K_S \cdot H) - K_S^2 + 3$. 
\end{proof}

Note that the above lemma shows one of the technical difference with the K3 case. The existence of anticanonical pencil implies that there is an injection $\o_S(2 K_S + H) \hookrightarrow \o_S(K_S + H)$. This makes possible to get an appropriate bound, which is an useless process for K3 surfaces. With above lemmas, now we can prove our main theorem:

\begin{proof}[Proof of Theorem \ref{Main}]
We start with some computations of invariants for convenience. Let $C \in |\omega_S(3)|$ be a general curve satisfying all the conditions of the theorem, and fix it. By the Riemann-Roch and adjunction formula, we get

\begin{itemize}
\item $g = g(C) = \frac{9}{2}d + \frac{9}{2}(K_S \cdot H) + K_S^2 + 1$
\item $\cliff(C) = \cliff(\o_C (K_S + H)) = 2d + 3(K_S \cdot H) + K_S^2$
\item $k = \gon(C) = \cliff(C) + 2 = 2d + 3(K_S \cdot H) + K_S^2 + 2$
\item $g-k+3 = \frac{5}{2}d + \frac{3}{2}(K_S \cdot H) + 2$.
\end{itemize}

Note that the number $(g-k+3)$ is the degree of $0$-cycles $\Gamma$ in \emph{Lemma \ref{Cycles}}. According to \emph{Lemma \ref{Pencil}}, $C$ carries a complete and base point free pencil $A \in W^1_{g-k+3}$. As we discussed above, the Lazarsfeld-Mukai bundle $E_{C,A}$ is a strong candidate to be Ulrich. Now we consider the relative Brill-Noether scheme $\mathcal{W}^1_{g-k+3} ( |\omega_S (3)| )$. It is clear that the general point $(C,A)$ in any dominating component $\mathcal{W} \subseteq \mathcal{W}^1_{g-k+3}$ over the linear system $|\omega_S (3)|$ corresponds to a complete and base point free pencil $\mathfrak{g}^1_{g-k+3}$. We may also assume that $A \not\simeq \omega_S^{\vee} \otimes \o_C$ which implies $h^0 (S, E_{C,A}) = \chi (S, E_{C,A}) = 2d$(cf. (3.1) of \cite{LC13}). We will abbreviate $E_{C,A}$ by $E$ if there is no confusion.

Since the Ulrich conditions are open conditions coming from the cohomology vanishing, it is natural to ask when a Lazarsfeld-Mukai bundle fails to be Ulrich. We need to verify that the \emph{non-Ulrich locus} cannot fill the whole $\mathcal{W}$, that is, the Lazarsfeld-Mukai bundle $E_{C,A}$ corresponding to a general point of $\mathcal{W}$ is indeed Ulrich. To do this, we first check the partial Ulrich condition 
\begin{equation}\label{cond:partialUlrich}
H^0 (S, E(-1)) = 0.
\end{equation}

We shall find an explicit parametrization of the failure locus of (\ref{cond:partialUlrich}) and count the number of parameters. Let $\sG = \left\{(E_{C,A},  \Lambda) \ \big{|}\ (C,A) \in \mathcal{W}, \Lambda \in G(2, H^0 (S, E_{C,A})) \right\} $ be the Grassmannian bundle over the moduli space of Lazarsfeld-Mukai bundles. We have a lower bound for the dimension

\begin{eqnarray*}
\dim \mathcal{W} & \ge& \dim |\omega_S(3)| + \rho (g, 1 g-k+3) \\
& = & \chi(\omega(3)) - 1 + \rho(g,1,g-k+3) \\
& = & 5d - K_S^2 + 1.
\end{eqnarray*}
Here the projection $\sG \to \mathcal{W}$ is dominant (the fiber is characterized by the group $H^2 (S, E_{C,A} \otimes E_{C,A}^{\vee} )$, cf. \cite{LC13}), we get a same lower bound $\dim \sG \ge \dim \mathcal{W} \ge 5d - K_S^2 + 1$. Since $h^0 (E) = 2d$, the dimension of the space of Lazarsfeld-Mukai bundles corresponding to the pairs $(C,A) \in \mathcal{W}$ has dimension at least $\dim \sG - \dim G(2, H^0 (S, E_{C,A})) \ge d-K_S^2 + 5$.

Next, we consider the projective bundle
\[
\mathcal{P} := \left\{ (E_{C,A}, \ell) \ \big{|}\  (C,A) \in \mathcal{W}, \  \ell \in \p H^0 (S, E_{C,A}) \right\}
\]
with $\dim \mathcal{P} \ge (d-K_S^2 + 5) + h^0 (S, E_{C,A}) - 1 = 3d - K_S^2 + 4$. This construction allows us to represent $E_{C,A}$ as an extension
\begin{equation} \label{seq:extension}
0\longrightarrow \mathcal O_S \stackrel{\ell} \longrightarrow E_{C,A}\longrightarrow \mathcal I_{\Gamma/S} \otimes \omega_S(3)\longrightarrow 0,
\end{equation}
where $\Gamma \in S^{[g-k+3]}$ is a 0-dimensional subscheme which satisfies the \emph{Cayley-Bacharach} condition(CB) with respect to $|\omega_S^2 \otimes \o_S (3)|$. For more details about CB, see \cite{GH78}, \cite{Cat90}, \cite{Laz97}. From the Grothendieck-Serre duality we have

\begin{eqnarray*}
\ext^1 (\mathcal{I}_{\Gamma/S} \otimes \omega_S (3), \o_S ) & \simeq & \ext^1 (\mathcal{I}_{\Gamma/S} \otimes \omega_S (3) \otimes \omega_S , \omega_S) \\
& \simeq & H^1 (S, \mathcal{I}_{\Gamma/S} \otimes \omega_S^2 \otimes \o_S (3))^{*}.
\end{eqnarray*}

Twisting the exact sequence (\ref{seq:extension}) by $\omega_S$ and taking the cohomology, we get an exact sequence
\[
0 = H^1 (S, E \otimes \omega_S ) \rightarrow H^1 (S, \mathcal{I}_{\Gamma/S} \otimes \omega_S^2  (3)) \rightarrow H^2(S, \omega_S ) \rightarrow H^2 (S, E \otimes \omega_S ) = 0,
\]
which says $\dim \ext^1(\mathcal{I}_{\Gamma/S} \otimes \omega_S (3), \o_S ) = h^2(S, \omega_S) = 1$. In particular, $\Gamma$  uniquely determines the Lazarsfeld-Mukai bundle $E_{C,A}$ and the map $\varphi : \mathcal{P} \to S^{[g-k+3]}$ given by $\varphi (( E_{C,A}, \ell)) := \Gamma$ which is generically injective onto its image.

Since $H^0 (S, E(-1)) \simeq H^0 (S, \mathcal{I}_{\Gamma / S} \otimes \omega_S(2))$, it is enough to show that the cycles $\Gamma \in \im(\varphi)$ with $H^0 (S, \mathcal{I}_{\Gamma / S} \otimes \omega_S(2)) \neq 0$ depend on at most $(3d - K_S^2 + 3) \le \dim \mathcal{P} -1$ parameters. We consider the incidence variety
\[
\mathcal{Z} = \{ (D, \Gamma) : D \in |\omega_S (2)|, \ \Gamma \subset D
\text{ satisfies CB with respect to } |\omega_S^2  (3)| \}.
\]
Choose and fix a smooth section $D \in |\omega_S(2) |$ and an effective divisor $\Gamma \in D_{g-k+3}$. For a point $p \in \supp(\Gamma)$, we write $\Gamma = \Gamma_p + p$ where $\Gamma_p \in D_{g-k+2}$. Since $H^1 (S, \omega_S (1))=0$ by Kodaira vanishing, we get a short exact sequence
\[
0 \longrightarrow H^0 (S, \omega_S (1)) \stackrel{+D} \longrightarrow H^0 (S,\mathcal{I}_{\Gamma/S} \otimes \omega_S^2 (3)) \longrightarrow H^0 (D, \o_D (2K_S + 3H - \Gamma)) \longrightarrow 0.
\]
We interpret the Cayley-Bacharach condition for $\Gamma$ as satisfying $H^0 (D, \o_D(2K_S + 3H - \Gamma)) \simeq H^0 (D, \o_D(2K_S + 3H - \Gamma_p))$ for any $p \in \supp(\Gamma)$, or equivalently by Riemann-Roch,
\[
h^0 (D, \o_D (\Gamma_p - H)) = h^0 (D, \o_D (\Gamma - H)) -1, \text{ for each } p \in \supp(\Gamma).
\]
In particular, $h^0 (D, \o_D (\Gamma - H)) \ge 1$. Applying \emph{Lemma \ref{Cycles}}, we conclude that the dimension of each fiber $\mathcal{Z} \to |\omega_S(2)|$ does not exceed $[d-(K_S \cdot H) - K_S^2 + 3]$; thus $\dim \mathcal{Z} \le \dim |\omega_S(2)| + [d-(K_S \cdot H) - K_S^2 + 3] = 3d - K_S^2 + 3$, which confirms that $E_{C,A}$ satisfies the partial Ulrich condition (\ref{cond:partialUlrich}) for a general $(C,A) \in \mathcal{W}$.

Indeed, these Lazarsfeld-Mukai bundles are Ulrich. By the Riemann-Roch formula, it is easily seen that 
\[
\chi(S, E(-1)) = \left( \frac{5}{2}d + \frac{3}{2} (K_S \cdot H) + 2 \right) - c_2 (E) = 0.
\]
Also note that $H^2 (S, E(-1)) \simeq  H^0 (S, E(-2))^{*} = 0$. Therefore, we conclude that $E$ is Ulrich.

Finally, we want to check the stability. In our case, the local dimension at $E_{C,A}$ of the moduli space $\spl(2; \omega_S(3), g-k+3)$ of simple vector bundles of rank 2 on $S$ with first Chern class $\omega_S (3)$ and second Chern class $g-k+3$ is computed from the dimension of $M_H (2; \omega_S (3), g-k+3)$, the moduli space of rank 2 $H$-stable vector bundles on $S$ with given Chern classes. Thanks to \cite{Don86}, \cite{Zuo91} and \cite{CMR99}, $M_H (2; \omega_S (3), g-k+3)$ is (nonempty) smooth and irreducible variety of dimension $4 c_2 (E) - c_1^2 (E) - 3 \chi (\o_X) = d - K_S^2 + 5$ and it lies as an open dense subset in $\spl(2; \omega_S (3), g-k+3)$. Since Ulrich bundles are already semistable, our dimension estimation shows that the locus of strictly semistable cannot fill up the whole moduli space(see also (4.2) of \cite{LC13}). This completes the proof.
\end{proof}

\begin{exmp}
Let $(S, H)$ be a pair of Hirzebruch surface $\f_2$ and a very ample line bundle of type $(2, 5)$. Note that $h^0 (\omega_S ^{\vee}) = \chi( \omega_S ^{\vee}) = 1 + K_S^2 = 9.$ Since a smooth curve $C \in |K_S + 3H|$ is of type $(4, 11)$ which is 4-gonal, it is easy to see that $\cliff(C) = 2$. 
Simple calculations show that the Clifford index for a line bundle $\o_C(K_S + H)$ which is of $(0,1)$-type, the ruling on $S$, is 2. This example satisfies our assumption. 
\end{exmp}

\begin{rem}
The hypotheses on $C$ in \emph{Theorem \ref{Main}} only used to ensure a Brill-Noether condition: C carries a base point free pencil of degree $\left( \frac{5}{2}d + \frac{3}{2}(K_S \cdot H) + 2\right)$. 
\end{rem}

\begin{rem}
It is well known that the Hirzebruch surface $\f_a$ embedded in the projective space by the ample line bundle of type $(1,n), n>a$ has an Ulrich line bundle of type $(0, 2n-a-1)$. On the other hand, if we consider the embeddings of type $(2,n)$, there is no Ulrich line bundle in general. Let $H=(2,n)$, $n>2a$ be a very ample line bundle on $\f_a$. Suppose that a line bundle $L = (s,t)$ is Ulrich. Taking difference of the Euler characteristic $\chi(\f_a, L-H)=0$ and $\chi(\f_a, L-2H)=0$ we have an identity $t = \left( a - \frac{1}{2}n \right) s + \frac{1}{2}(5n-5a-2)$. Hence $\chi(\f_a, L-H) = \frac{1}{2}(s-3)(s-1)(a-n) = 0$.

Consider the case $s=1$. Use the identification in \cite{FM11},  $H^1(\f_a, L-2H) = H^1 \left(\f_a, (-3, - \frac{3}{2}a - 1 ) \right) \simeq H^0 \left(\p^1, \o_{\p^1} ( - \frac{1}{2}a - 1)\right) \oplus H^0 \left( \p^1, \o_{\p^1} (\frac{1}{2}a - 1 ) \right)$ which cannot be $0$ unless $a=0$, so $L$ cannot be Ulrich. Similarly we can find a contradiction for the case $s=3$; $H^1( \f_a, L-H )$ cannot be $0$.

To sum up, the pair $(\f_a , H=(2,n))$ has no Ulrich line bundle for $a>0$. On the other hand, as seen in the previous example, it can easily seen that $(\f_a , H=(2, a+3))$ carries a rank 2 Ulrich bundle for $0 \le a \le 2$.  As an exceptional case, for $(\f_0 = \p^1 \times \p^1 , H=(2,n))$, it carries Ulrich line bundles of the type $(1, 2n-1)$ and $(3, n-1)$. 
\end{rem}

\section{The Chow form of rational surfaces}

Another application of our result is that the Chow form of a polarized rational surfaces which describe in the main theorem has a Pfaffian form in Pl{\"u}cker coordinates. Let $(S,H) \subset \p^{d/2 - (K_S \cdot H)/2}$ be a rational surface which satisfies the hypotheses of the theorem, and fix it. We introduce the exterior algebras $\bold{\Lambda}$ and $\bold{\Lambda}^{\vee}$, with gradings 
\[
\bold{\Lambda}_{-i} := \bigwedge^i H^0 (S, \o_S(1))^* \text{ and } \bold{\Lambda}^{\vee}_i := \bigwedge^i H^0 (S, \o_S(1))
\]
respectively. Now choose a special Ulrich bundle $E$ on $S$, considering as a sheaf on $\p^{d/2 - (K_S \cdot H)/2}$, we have the following Tate resolution
\[
T^{\bullet}(E) : \cdots \longrightarrow T^{-1}(E) \stackrel{\varphi_E} \longrightarrow T^0 (E) \longrightarrow T^1(E) \longrightarrow \cdots
\]
where $T^p(E)$ is isomorphic to $ \bigoplus_{i=0}^2 \bold{\Lambda}^{\vee} \otimes H^i (S, E(p-i))$ as graded $\bold{\Lambda}$-modules. Since $E$ is Ulrich, the resolution is quite simple, in particular, $T^{-1}(E) = \bold{\Lambda}^{\vee}(3) \otimes H^2 (S, E(-3))$ and $T^0(E) = \bold{\Lambda}^{\vee} \otimes H^0 (S, E)$. Following up \cite{ESW03}, we pass through the functor $\bold{U}_3$ from the category of free graded $\bold{\Lambda}$ modules to the category of vector bundles over $\bold{G} := G\left(\frac{1}{2}d - \frac{1}{2}(K_S \cdot H) - 2,  H^0 (S, \o_S(1))^{*}\right)$ the Grassmannian of codimension 3 planes in $\p^{d/2 - (K_S \cdot H)/2}$. The complex $\bold{U}_3 (E)$ is composed of a single morphism of vector bundles on $\bold{G}$
\[
\varphi : H^2 (S, E(-3)) \otimes \bigwedge^3 \mathcal U \to H^0 (S, E) \otimes \o_{\bold{G}}
\]
where $\mathcal{U}$ is the rank 3 tautological bundle on $\bold{G}$. Applying (3.4) of \cite{ESW03}, we get the following result:

\begin{cor}
Let $(S, H)$ and $C \in |\omega_S(3)|$ be a rational surface and a general curve described in \emph{Theorem \ref{Main}}. Let $E$ be a rank 2 Ulrich bundle on $S$ with $\det(E) = \omega_S (3)$. Then there exists a skew-symmetric morphism of vector bundles of rank $2d$ on $\bold{G}$
\[
\bold{U}_3 (\varphi_E) : H^2 (S, E(-3)) \otimes \bigwedge^3 \mathcal U \to H^0 (S, E) \otimes \o_{\bold{G}}
\]
whose Pfaffian is precisely the Chow form of $S$. 
\end{cor}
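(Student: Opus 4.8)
The plan is to deduce the statement from the general theory relating Ulrich sheaves to Chow forms developed in \cite{ESW03}, the only additional input being the self-duality enjoyed by a rank $2$ special Ulrich bundle. Concretely, I would proceed in three steps: (i) use the Ulrich property to collapse the relevant part of the Tate resolution to the single differential $\varphi_E$, so that $\bold{U}_3(\varphi_E)$ is a morphism between two vector bundles of the same rank $2d$ whose determinant is a power of the Chow form; (ii) promote this morphism to a skew-symmetric one by transporting the alternating pairing $E\otimes E\to\det(E)=\omega_S(3)$ through the functor $\bold{U}_3$; and (iii) identify the Pfaffian with the Chow form by a degree count. Let me write $R_S$ for the Chow form of $S$, a hypersurface in $\bold{G}$ of Plücker degree $\deg S = H^2 = d$.

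For step (i), recall that $E$ being Ulrich forces $H^i(S,E(j))=0$ except for $H^0(S,E(j))$ with $j\ge 0$ and $H^2(S,E(j))$ with $j\le -3$; hence in the relevant cohomological range the Tate resolution $T^\bullet(E)$ has only the two terms $T^{-1}(E)=\bold{\Lambda}^\vee(3)\otimes H^2(S,E(-3))$ and $T^0(E)=\bold{\Lambda}^\vee\otimes H^0(S,E)$, joined by $\varphi_E$. Applying $\bold{U}_3$ together with (3.4) of \cite{ESW03} yields the morphism $H^2(S,E(-3))\otimes\bigwedge^3\mathcal U\to H^0(S,E)\otimes\o_{\bold G}$. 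Here $h^0(S,E)=2d$ was computed in the proof of Theorem \ref{Main}, and by Serre duality together with the isomorphism $E\simeq E^\vee\otimes\omega_S(3)$ one gets $h^2(S,E(-3))=h^0(S,E)=2d$; since $\bigwedge^3\mathcal U=\det\mathcal U$ is a line bundle, $\bold{U}_3(\varphi_E)$ is a square morphism of rank $2d$. The theorem of \cite{ESW03} then gives $\det\bold{U}_3(\varphi_E)=R_S^{\,\rank E}=R_S^{\,2}$ up to a nonzero scalar.

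For step (ii), which I expect to be the crux, I would exploit the self-duality $E\simeq E^\vee\otimes\omega_S(3)$ already used in the Remark after the definition of special Ulrich bundles. For a rank $2$ bundle this isomorphism comes from the \emph{alternating} wedge pairing $E\otimes E\to\bigwedge^2 E=\det E=\omega_S(3)$, and it is precisely the skewness of this pairing that should propagate to the desired conclusion. Under Serre duality on $S$ the pairing identifies $H^2(S,E(-3))$ with $H^0(S,E)^*$ and, more importantly, makes the Tate differential $\varphi_E$ anti-self-dual. The functor $\bold{U}_3$ is compatible with duality — it intertwines the $\o_S$-linear duality on $S$ with the Grothendieck--Serre duality on $\bold G$ twisted by the line bundle $\bigwedge^3\mathcal U$ — so that $\bold{U}_3(\varphi_E)$ becomes skew-symmetric, i.e.\ equals minus its transpose under the resulting identification of its target $H^0(S,E)\otimes\o_{\bold G}$ with the $\bigwedge^3\mathcal U$-twisted dual of its source. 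The delicate point is the sign: one must verify that the parity contributed by the single odd cohomological step produces an \emph{alternating} rather than a symmetric form. I would check this either by a direct BGG computation on the generators of $T^{-1}(E)$ and $T^0(E)$, or by invoking the part of the proof of (3.4) of \cite{ESW03} in which exactly the rank $2$, $\det=\omega_S(3)$ normalization is singled out as the self-dual case.

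Finally, for step (iii), a skew-symmetric $2d\times 2d$ morphism has a well-defined Pfaffian, a global section whose square is the determinant, so that $\mathrm{Pf}(\bold{U}_3(\varphi_E))^2=\det\bold{U}_3(\varphi_E)=R_S^{\,2}$ up to scalar. The Pfaffian has degree $d$ in the Plücker coordinates (its entries being linear), which matches $\deg R_S=\deg S=d$. Since $\pic(\bold G)=\z$ and the Plücker coordinate ring of $\bold G$ is factorial, the relation $x^2=R_S^{\,2}$ forces $\mathrm{Pf}(\bold{U}_3(\varphi_E))=\pm R_S$; equivalently, the degeneracy locus of the skew-symmetric map is reduced of the same degree as the Chow divisor, so the two agree as divisors and the Pfaffian is the defining section up to a scalar. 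Either way the Pfaffian of $\bold{U}_3(\varphi_E)$ coincides with the Chow form of $S$, as claimed.
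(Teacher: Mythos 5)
Your proposal is correct and takes essentially the same route as the paper: the paper simply sets up the Tate resolution and the functor $\bold{U}_3$ and then invokes (3.4) of \cite{ESW03}, which is exactly the result your three steps unpack (square size $2d$ from $h^0(E)=h^2(E(-3))=2d$, skew-symmetry from the alternating pairing $E\otimes E\to\omega_S(3)$, and the Pfaffian--Chow form identification). The only point you flag as delicate --- the sign making $\bold{U}_3(\varphi_E)$ alternating rather than symmetric --- is precisely what is delegated to \cite{ESW03} in both your write-up and the paper's, so nothing is missing relative to the paper's own (skipped) proof.
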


We skip the proof since the result itself is just a direct consequence of the existence of special Ulrich bundles(cf. (3.4) of \cite{ESW03}). Also it is worthwhile to see \cite{AFO12}, which provides an explicit description for this linear map. Since the result comes from properties of Ulrich bundles, exactly the same description holds for our case.
\\
\\

\def\cprime{$'$} \def\cprime{$'$} \def\cprime{$'$} \def\cprime{$'$}
  \def\cprime{$'$} \def\cprime{$'$} \def\dbar{\leavevmode\hbox to
  0pt{\hskip.2ex \accent"16\hss}d} \def\cprime{$'$} \def\cprime{$'$}
  \def\polhk#1{\setbox0=\hbox{#1}{\ooalign{\hidewidth
  \lower1.5ex\hbox{`}\hidewidth\crcr\unhbox0}}} \def\cprime{$'$}
  \def\cprime{$'$} \def\cprime{$'$} \def\cprime{$'$}
  \def\polhk#1{\setbox0=\hbox{#1}{\ooalign{\hidewidth
  \lower1.5ex\hbox{`}\hidewidth\crcr\unhbox0}}} \def\cdprime{$''$}
  \def\cprime{$'$} \def\cprime{$'$} \def\cprime{$'$} \def\cprime{$'$}
\providecommand{\bysame}{\leavevmode\hbox to3em{\hrulefill}\thinspace}
\providecommand{\MR}{\relax\ifhmode\unskip\space\fi MR }
\providecommand{\MRhref}[2]{%
  \href{http://www.ams.org/mathscinet-getitem?mr=#1}{#2}
}
\providecommand{\href}[2]{#2}

\vskip1cm

\noindent KAIST, Daejeon, Korea

{\begin{verbatim}Yeongrak.Kim@gmail.com\end{verbatim}}

\end{document}